\theoremstyle{plain}
\newtheorem{lemma}{Lemma}
\newtheorem{theorem}{Theorem}
\numberwithin{equation}{section}
\begin{document}
\title[A generalization of Combinatorial Nullstellensatz]{A generalization
of Combinatorial Nullstellensatz}
\author{Micha\l\ Laso\'{n}}
\address{Institute of Mathematics of the Polish Academy of Sciences, \'{S}niadeckich 8, 00-956 Warszawa, Poland}
\address{Theoretical Computer Science Department, Faculty of Mathematics and
Computer Science, Jagiellonian University, \L ojasiewicza 6, 30-348 Krak\'{o}w, Poland}
\keywords{Combinatorial Nullstellensatz}

\begin{abstract}
In this note we give an extended version of Combinatorial Nullstellensatz,
with weaker assumption on the degree of nonvanishing monomial. We also
present an application of our result in a situation where the original
theorem does not seem to work.
\end{abstract}

\maketitle

\section{Introduction}

The following theorem of Alon, known as Combinatorial Nullstellensatz, has
numerous applications in Combinatorics, Graph Theory, and Additive Number
Theory (see \cite{AlonCPC}).

\begin{theorem}
\label{CN}\emph{(Combinatorial Nullstellensatz \cite{AlonCPC})} Let $\mathbb{%
F}$ be an arbitrary field, and let $f$ be a polynomial in $\mathbb{F}%
[x_{1},...,x_{n}]$. Suppose the coefficient of $x^{\alpha _{1}}\cdots
x_{n}^{\alpha _{n}}$ in $f$ is nonzero and $\deg (f)=\sum_{i=1}^{n}\alpha
_{i}$. Then for any subsets $A_{1},\ldots ,A_{n}$ of $\mathbb{F}$ satisfying 
$\left\vert A_{i}\right\vert \geq \alpha _{i}+1$, there are $a_{1}\in
A_{1},\ldots ,a_{n}\in A_{n}$ so that $f(a_{1},\ldots ,a_{n})\neq 0$.
\end{theorem}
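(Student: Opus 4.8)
The plan is to follow the two-step strategy that is standard for nonvanishing results of this kind. \emph{Step one} is an auxiliary lemma: if $f\in\mathbb{F}[x_1,\dots,x_n]$ has $\deg_{x_i}f\le t_i$ for each $i$, if $A_i\subseteq\mathbb{F}$ with $|A_i|=t_i+1$, and if $f$ vanishes identically on $A_1\times\cdots\times A_n$, then $f$ is the zero polynomial. I would prove this by induction on $n$. The case $n=1$ is the familiar fact that a nonzero one-variable polynomial of degree at most $t_1$ cannot have $t_1+1$ distinct roots. For the inductive step, write $f=\sum_{j=0}^{t_n}x_n^j f_j(x_1,\dots,x_{n-1})$; fixing any $(a_1,\dots,a_{n-1})\in A_1\times\cdots\times A_{n-1}$ gives a polynomial in $x_n$ of degree at most $t_n$ vanishing on the $(t_n+1)$-element set $A_n$, hence the zero polynomial in $x_n$, so $f_j(a_1,\dots,a_{n-1})=0$ for every $j$; since $(a_1,\dots,a_{n-1})$ was arbitrary, the induction hypothesis applied to each $f_j$ gives $f_j\equiv 0$, and thus $f\equiv 0$.

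\emph{Step two} reduces the theorem to this lemma. After discarding superfluous elements we may assume $|A_i|=\alpha_i+1$. Suppose, for contradiction, that $f$ vanishes on all of $A_1\times\cdots\times A_n$. For each $i$ set $g_i(x_i)=\prod_{a\in A_i}(x_i-a)=x_i^{\alpha_i+1}-h_i(x_i)$ with $\deg h_i\le\alpha_i$, and note that $g_i$ vanishes on $A_i$. Now repeatedly rewrite $f$: whenever a monomial of the current polynomial contains $x_i^{\beta_i}$ with $\beta_i\ge\alpha_i+1$, replace the factor $x_i^{\alpha_i+1}$ occurring in it by $h_i(x_i)$, which amounts to subtracting an appropriate polynomial multiple of $g_i$. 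Each subtracted multiple vanishes on $A_1\times\cdots\times A_n$, and each such replacement turns a monomial $x^\gamma$ with $\gamma_i\ge\alpha_i+1$ into a sum of monomials of strictly smaller total degree (since $\deg h_i\le\alpha_i$) and of strictly smaller $x_i$-degree, while touching no other variable; hence the process terminates and produces a polynomial $\bar f$ with $\deg_{x_i}\bar f\le\alpha_i$ for every $i$, agreeing with $f$ at every point of $A_1\times\cdots\times A_n$. So $\bar f$ vanishes on $A_1\times\cdots\times A_n$, and the lemma gives $\bar f\equiv 0$.

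To reach a contradiction I would track the coefficient of $m:=x_1^{\alpha_1}\cdots x_n^{\alpha_n}$ through the reduction. Put $d:=\sum_{i=1}^n\alpha_i=\deg f$, so $m$ is a monomial of maximal total degree $d$. A monomial of $f$ all of whose exponents satisfy $\gamma_i\le\alpha_i$ is never rewritten, and if in addition its total degree is $d$ then $\gamma_i=\alpha_i$ for all $i$, i.e. it is $m$ itself; any monomial of $f$ with some exponent exceeding its bound is, after one rewriting, a sum of monomials of total degree at most $d-1$, and further rewritings only decrease total degree, so no such monomial ever contributes a copy of $m$. Therefore the coefficient of $m$ in $\bar f$ equals its coefficient in $f$, which is nonzero by hypothesis — contradicting $\bar f\equiv 0$. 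Hence $f$ cannot vanish on all of $A_1\times\cdots\times A_n$, which is exactly the assertion of the theorem.

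\emph{Main obstacle.} The delicate part is the bookkeeping in Step two: one must confirm that the rewriting terminates, that the resulting $\bar f$ really has each $x_i$-degree bounded by $\alpha_i$, and — above all — that the rewriting creates no new monomial of total degree $d$, so that the coefficient of $m$ is genuinely preserved. The hypothesis $\deg f=\sum_i\alpha_i$ enters precisely here: without the bound $\deg f\le d$, a monomial of $f$ of degree $d+1$ divisible by $x_i^{\alpha_i+1}$ could, upon rewriting, produce a fresh copy of $m$, and the comparison of coefficients would break down.
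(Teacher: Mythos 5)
Your proof is correct, and it is essentially Alon's original argument: a two-step reduction resting on the vanishing lemma for boxes and on rewriting modulo $g_i(x_i)=\prod_{a\in A_i}(x_i-a)$, with the coefficient of $x_1^{\alpha_1}\cdots x_n^{\alpha_n}$ tracked through the reduction. This is a genuinely different route from the paper's, which never proves Theorem \ref{CN} directly: it instead proves the stronger Theorem \ref{GCN}, where $\deg f=\sum_i\alpha_i$ is weakened to the requirement that $(\alpha_1,\ldots,\alpha_n)$ be maximal in $\limfunc{Supp}(f)$ (Theorem \ref{CN} is the special case, since under the degree hypothesis any exponent vector strictly dominating $(\alpha_1,\ldots,\alpha_n)$ would exceed $\deg f$). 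The paper's proof, following Micha\l ek, is a single induction on $\alpha_1+\cdots+\alpha_n$: fix $a\in A_1$, write $f=g\cdot(x_1-a)+h$ with $h$ free of $x_1$; either $h$ is nonzero somewhere on $A_2\times\cdots\times A_n$, or one applies the inductive hypothesis to $g$, in whose support $(\alpha_1-1,\alpha_2,\ldots,\alpha_n)$ is again maximal. As to what each approach buys: yours is the classical self-contained argument, but, as you correctly isolate in your closing paragraph, the coefficient-preservation step leans essentially on $\deg f=\sum_i\alpha_i$ and so does not extend as written to the maximal-support hypothesis; the paper's induction avoids the reduction bookkeeping entirely and yields the stronger statement. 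The one soft spot in your write-up is termination of the rewriting: since a single rewrite can spawn several monomials that still violate the degree bounds, ``total degree strictly decreases'' should be backed by a multiset or well-founded ordering argument, or replaced by successive division with remainder by $g_1,\ldots,g_n$; this is routine and does not affect correctness.
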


In this paper we extend this theorem by weakening the assumption on the
degree of nonvanishing monomial. We also provide an explicit formula for
coefficients of monomials in the usual expansion of $f$. Similar results
were obtained independently by Schauz \cite{Schauz}, however our proofs are
simple and more direct. The paper is concluded with an application to a
graph labeling problem for which classical approach does not seem to work.

\section{Generalized Combinatorial Nullstellensatz}

Let $\mathbb{F}$ be an arbitrary field, and let $f$ be a polynomial in $%
\mathbb{F}[x_{1},...,x_{n}]$. We define the \emph{support} of $f$ by $%
\limfunc{Supp}(f):=\{(\alpha _{1},\ldots ,\alpha _{n})\in \mathbb{N}^{n}:\;$%
the coefficient of $x_{1}^{\alpha _{1}}\cdots x_{n}^{\alpha _{n}}$ in $f$ is
nonzero$\}$. On the set $\mathbb{N}^{n}$ and hence also on $\limfunc{Supp}(f)
$ we have natural partial order: $(\alpha _{1},\ldots ,\alpha _{n})\geq
(\beta _{1},\ldots ,\beta _{n})$ if and only if $\alpha _{i}\geq \beta _{i}$
for all $i$. The proof of the following theorem is a simple extension of an
argument found by Micha\l ek \cite{Michalek}.

\begin{theorem}
\label{GCN}\emph{(Generalized Combinatorial Nullstellensatz)} Let $\mathbb{F}
$ be an arbitrary field, and let $f$ be a polynomial in $\mathbb{F}%
[x_{1},\ldots ,x_{n}]$. Suppose $x_{1}^{\alpha _{1}}\cdots x_{n}^{\alpha
_{n}}$ is a nonvanishing monomial in $f$ and $(\alpha _{1},\ldots ,\alpha
_{n})$ is maximal in $\limfunc{Supp}(f)$. Then for any subsets $A_{1},\ldots
,A_{n}$ of $\mathbb{F}$ satisfying $\left\vert A_{i}\right\vert \geq \alpha
_{i}+1$, there are $a_{1}\in A_{1},\ldots ,a_{n}\in A_{n}$ so that $%
f(a_{1},\ldots ,a_{n})\neq 0$.
\end{theorem}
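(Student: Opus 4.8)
The plan is to argue by contradiction, following the reduction idea of Micha\l ek \cite{Michalek}. Suppose $f(a_1,\dots,a_n)=0$ for every $(a_1,\dots,a_n)\in A_1\times\cdots\times A_n$. Replacing each $A_i$ by an arbitrary $(\alpha_i+1)$-element subset only strengthens this hypothesis, so we may assume $|A_i|=\alpha_i+1$. Put $g_i(x_i):=\prod_{a\in A_i}(x_i-a)\in\mathbb{F}[x_i]$, a monic polynomial of degree $\alpha_i+1$ vanishing on $A_i$, and note that $g_i(x_i)-x_i^{\alpha_i+1}$ has degree at most $\alpha_i$ in $x_i$. Now reduce $f$ modulo $g_1,\dots,g_n$: whenever a monomial $x^\gamma$ with $\gamma_i\ge\alpha_i+1$ occurs, factor out $x_i^{\alpha_i+1}$ and replace it by $-\bigl(g_i(x_i)-x_i^{\alpha_i+1}\bigr)$, discarding the resulting multiple of $g_i$; this replaces $x^\gamma$ by a combination of monomials whose $i$-th exponent is strictly smaller and whose other exponents are unchanged. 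Iterating until no variable exceeds its bound yields a polynomial $\bar f$ with $\deg_{x_i}\bar f\le\alpha_i$ for all $i$ and $f-\bar f\in(g_1,\dots,g_n)$. Since each $g_i$ vanishes on $A_i$, the polynomial $\bar f$ agrees with $f$ on the grid $A_1\times\cdots\times A_n$, hence vanishes there.

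The heart of the argument is to check that the coefficient of $x^\alpha:=x_1^{\alpha_1}\cdots x_n^{\alpha_n}$ in $\bar f$ is still nonzero, and this is exactly where maximality of $(\alpha_1,\dots,\alpha_n)$ in $\limfunc{Supp}(f)$ enters. Carry out the reduction above separately on each monomial of $f$ and add up: writing $R(x^\gamma)$ for the reduced form of $x^\gamma$ and $c_\gamma$ for the coefficient of $x^\gamma$ in $f$, we get $\bar f=\sum_{\gamma\in\limfunc{Supp}(f)}c_\gamma R(x^\gamma)$. Every elementary reduction step only lowers exponents, so each monomial occurring in $R(x^\gamma)$ is $\le\gamma$ in the partial order on $\mathbb{N}^n$. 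Hence $x^\alpha$ can occur in $R(x^\gamma)$ only if $(\alpha_1,\dots,\alpha_n)\le\gamma$; but $\gamma\in\limfunc{Supp}(f)$ and $(\alpha_1,\dots,\alpha_n)$ is maximal there, which forces $\gamma=(\alpha_1,\dots,\alpha_n)$. Since $x^\alpha$ needs no reduction, $R(x^\alpha)=x^\alpha$, and therefore the coefficient of $x^\alpha$ in $\bar f$ is precisely $c_{(\alpha_1,\dots,\alpha_n)}\neq0$.

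It remains to reach the contradiction. The polynomial $\bar f$ vanishes on $A_1\times\cdots\times A_n$ and satisfies $\deg_{x_i}\bar f\le\alpha_i=|A_i|-1$ for each $i$; a standard induction on $n$ --- a one-variable polynomial of degree $\le d$ with $d+1$ distinct roots is zero, applied variable by variable with the others held fixed --- then forces $\bar f$ to be the zero polynomial. This contradicts the previous paragraph, where its $x^\alpha$-coefficient was shown to be nonzero. Hence $f$ cannot vanish identically on $A_1\times\cdots\times A_n$, which is the assertion of the theorem. The one point requiring care is the middle step: one must make sure the reduction never raises an exponent, so that a monomial $x^\gamma$ of $f$ can create $x^\alpha$ only when $\gamma\ge(\alpha_1,\dots,\alpha_n)$ --- and the maximality hypothesis is precisely what rules out such a $\gamma$ other than $(\alpha_1,\dots,\alpha_n)$ itself, which is why this formulation succeeds where only the top-degree Combinatorial Nullstellensatz was available.
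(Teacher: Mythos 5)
Your proof is correct, but it follows a genuinely different route from the one in the paper. The paper argues by induction on $\alpha_1+\cdots+\alpha_n$: it fixes $a\in A_1$, writes $f=g\cdot(x_1-a)+h$ with $h$ free of $x_1$, and either finds a nonzero value of $h$ on $A_2\times\cdots\times A_n$ or passes to the quotient $g$, in which $(\alpha_1-1,\alpha_2,\ldots,\alpha_n)$ is again maximal in the support. You instead adapt Alon's original reduction argument: you reduce $f$ modulo the grid polynomials $g_i(x_i)=\prod_{a\in A_i}(x_i-a)$ to a polynomial $\bar f$ with $\deg_{x_i}\bar f\le\alpha_i$ that agrees with $f$ on $A_1\times\cdots\times A_n$, and then invoke the standard vanishing lemma for polynomials of bounded partial degrees. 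The crucial (and correct) observation is that each elementary reduction step replaces a monomial by monomials that are strictly smaller in the coordinatewise partial order --- one exponent drops, the others are unchanged --- so the coefficient of $x_1^{\alpha_1}\cdots x_n^{\alpha_n}$ in $\bar f$ can only receive contributions from monomials of $f$ that dominate $(\alpha_1,\ldots,\alpha_n)$, and maximality in $\limfunc{Supp}(f)$, rather than the total-degree hypothesis of Theorem \ref{CN}, is exactly what forces this coefficient to survive unchanged. What your approach buys is the insight that Alon's classical proof already yields the generalized statement once the bookkeeping is done in the partial order instead of by total degree, and it produces along the way the reduced representative of $f$ modulo $(g_1,\ldots,g_n)$; what the paper's approach buys is a shorter, self-contained induction that never needs the multivariate vanishing lemma. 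The steps you flag as delicate (termination of the reduction, linearity, the grid-vanishing lemma) are all fine as you state them.
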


\begin{proof}
The proof is by induction on $\alpha _{1}+\ldots +\alpha _{n}$. If $\alpha
_{1}+\ldots +\alpha _{n}=0$, then $f\equiv c\neq 0$ and the assertion is
true. If $\alpha _{1}+\ldots +\alpha _{n}>0$, then, without loss of
generality, we can assume that $\alpha _{1}>0$. Fix $a\in A_{1}$ and divide $%
f$ by $(x_{1}-a)$. So, we have 
\begin{equation*}
f=g\cdot (x_{1}-a)+h,
\end{equation*}%
where $\deg _{x_{1}}(h)=0$. This means that $h$ depends only on the
variables $x_{2},\ldots ,x_{n}$. If there exists $a_{2}\in A_{2},\ldots
,a_{n}\in A_{n}$ so that $h(a_{2},\ldots ,a_{n})\neq 0$, then we get $%
f(a,a_{2},\ldots ,a_{n})=h(a_{2},\ldots ,a_{n})\neq 0$, which proves the
assertion. Otherwise $h|_{A_{2}\times \ldots \times A_{n}}\equiv 0$. By the
division algorithm we have 
\begin{equation*}
\limfunc{Supp}(g)\subseteq \{(\alpha _{1}-r,\alpha _{2},\ldots ,\alpha
_{n}):\;(\alpha _{1},\alpha _{2},\ldots ,\alpha _{n})\in \limfunc{Supp}%
(f),\;1\leq r\leq \alpha _{1}\},
\end{equation*}%
and $(\alpha _{1}-1,\alpha _{2},\ldots ,\alpha _{n})\in \limfunc{Supp}(g)$.
Thus the tuple $(\alpha _{1}-1,\alpha _{2},\ldots ,\alpha _{n})$ is maximal
in $\limfunc{Supp}(g)$. By inductive assumption we know that there exist $%
a_{1}\in A_{1}\setminus {\{a\}},a_{2}\in A_{2},\ldots ,a_{n}\in A_{n}$ so
that $g(a_{1},\ldots ,a_{n})\neq 0$. Hence 
\begin{equation*}
f(a_{1},a_{2},\ldots ,a_{n})=(a_{1}-a)\cdot g(a_{1},\ldots ,a_{n})\neq 0,
\end{equation*}%
which proves the assertion of the theorem.
\end{proof}

\section{Coefficient formula}

Let $\mathbb{F}$ be an arbitrary field and let $A_{1},\ldots ,A_{n}$ be any
finite subsets of $\mathbb{F}$. Define the function $N:A_{1}\times \ldots
\times A_{n}\rightarrow \mathbb{F}$ by%
\begin{equation*}
N(a_{1},\ldots ,a_{n})=\prod_{i=1}^{n}\prod_{b\in A_{i}\setminus
\{a_{i}\}}(a_{i}-b).
\end{equation*}%
We may think of the function $N$ as a normalizing factor for the
interpolating function on $A_{1}\times ...\times A_{n}$ defined by 
\begin{equation*}
\chi _{(a_{1},\ldots ,a_{n})}(x_{1},\ldots ,x_{n})=N(a_{1},\ldots
,a_{n})^{-1}\cdot \prod_{i=1}^{n}\prod_{b\in A_{i}\setminus
\{a_{i}\}}(x_{i}-b).
\end{equation*}%
Notice that $\chi _{(a_{1},\ldots ,a_{n})}$ is everywhere zero on $%
A_{1}\times \ldots \times A_{n}$, except at the point $(a_{1},\ldots ,a_{n})$
for which it takes the value of $1$.

We will need the following simple lemma.

\begin{lemma}
\label{L}Let $A$ be any finite subset of the field $\mathbb{F}$, with $%
\left\vert A\right\vert \geq 2$. Then 
\begin{equation*}
\sum_{a\in A}\prod_{b\in A\setminus \{a\}}(b-a)^{-1}=0.
\end{equation*}
\end{lemma}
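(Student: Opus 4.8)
The plan is to recognise the sum as the top-degree coefficient of a Lagrange interpolation polynomial. Set $k:=\left\vert A\right\vert \geq 2$, and let $p\in \mathbb{F}[x]$ be the polynomial of degree at most $k-1$ that takes the value $1$ at every point of $A$. By Lagrange's interpolation formula,
\[
p(x)=\sum_{a\in A}\ \prod_{b\in A\setminus \{a\}}\frac{x-b}{a-b}.
\]
On the other hand the constant polynomial $1$ already has degree at most $k-1$ and agrees with $p$ at the $k$ distinct points of $A$, so by uniqueness of interpolation $p\equiv 1$.

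Next I would compare the coefficient of $x^{k-1}$ in the two expressions for $p$. Because $k-1\geq 1$, the coefficient of $x^{k-1}$ in the constant polynomial $1$ is $0$. In each summand of the Lagrange formula, the unique monomial of degree $k-1$ in $\prod_{b\in A\setminus \{a\}}\frac{x-b}{a-b}$ arises by taking $x$ from every factor, contributing $\prod_{b\in A\setminus \{a\}}(a-b)^{-1}$. Equating the coefficients gives
\[
\sum_{a\in A}\ \prod_{b\in A\setminus \{a\}}(a-b)^{-1}=0.
\]

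It remains only to reconcile the sign. Since $\left\vert A\setminus \{a\}\right\vert =k-1$, we have $\prod_{b\in A\setminus \{a\}}(a-b)^{-1}=(-1)^{k-1}\prod_{b\in A\setminus \{a\}}(b-a)^{-1}$, and $(-1)^{k-1}$ is a nonzero scalar of $\mathbb{F}$. Multiplying the displayed identity by $(-1)^{k-1}$ therefore yields $\sum_{a\in A}\prod_{b\in A\setminus \{a\}}(b-a)^{-1}=0$, which is exactly the assertion of the lemma.

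There is no genuinely hard step here; the only points needing a moment of care are that the hypothesis $\left\vert A\right\vert \geq 2$ is precisely what forces $k-1\geq 1$, so that the vanishing of the leading coefficient is nontrivial information, and the bookkeeping of the factor $(-1)^{k-1}$. An equivalent route, if one prefers to avoid interpolation, is to write the partial-fraction decomposition $\prod_{a\in A}(x-a)^{-1}=\sum_{a\in A}c_{a}(x-a)^{-1}$ with $c_{a}=\prod_{b\in A\setminus \{a\}}(a-b)^{-1}$, clear denominators to obtain the polynomial identity $1=\sum_{a\in A}c_{a}\prod_{b\in A\setminus \{a\}}(x-b)$, and again read off the coefficient of $x^{k-1}$.
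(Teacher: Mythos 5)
Your proof is correct and follows essentially the same route as the paper: both identify the sum as the coefficient of $x^{|A|-1}$ in the Lagrange interpolation polynomial of the constant function $1$ on $A$, which must vanish. Your explicit handling of the sign $(-1)^{k-1}$ is a small point of extra care that the paper glosses over (harmlessly, since a nonzero scalar multiple of the sum vanishes if and only if the sum does).
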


\begin{proof}
Consider the polynomial%
\begin{equation*}
f(x)=\sum_{a\in A}\prod_{b\in A\setminus \{a\}}\frac{(x-b)}{(a-b)}.
\end{equation*}%
Its degree is at most $\left\vert A\right\vert -1$, and for all $a\in A$ it
takes value of $1$. Hence $f\equiv 1$ and the coefficient of $x^{\left\vert
A\right\vert -1}$ equals $0$. But it is also the same as the the left hand
side of the asserted equality.
\end{proof}

\begin{theorem}
\label{CF}\emph{(Coefficient Formula)} Let $f$ be a polynomial in $\mathbb{F}%
[x_{1},\ldots ,x_{n}]$ and let $f_{\alpha _{1},\ldots ,\alpha _{n}}$ denote
the coefficient of $x_{1}^{\alpha _{1}}\cdots x_{n}^{\alpha _{n}}$ in $f$.
Suppose that $(\alpha _{1},\ldots ,\alpha _{n})$ is maximal in $\limfunc{Supp%
}(f)$. Then for any sets $A_{1},\ldots ,A_{n}$ in $\mathbb{F}$ such that $%
\left\vert A_{i}\right\vert =\alpha _{i}+1$ we have%
\begin{equation}
f_{\alpha _{1},\ldots ,\alpha _{n}}=\sum_{(a_{1},\ldots ,a_{n})\in
A_{1}\times \ldots \times A_{n}}\frac{f(a_{1},\ldots ,a_{n})}{N(a_{1},\ldots
,a_{n})}.  \tag{*}
\end{equation}
\end{theorem}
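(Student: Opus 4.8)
The plan is to show that the right-hand side of (*) is a linear functional on the space of polynomials that, when applied to $f$, picks out exactly the coefficient $f_{\alpha_1,\dots,\alpha_n}$. The key observation is that on the finite grid $A_1\times\cdots\times A_n$, the value of $f$ depends only on the residue of $f$ modulo the ideal generated by the $g_i(x_i):=\prod_{b\in A_i}(x_i-b)$, and this residue is a polynomial of degree at most $\alpha_i=|A_i|-1$ in each variable. So the first step is to reduce each monomial: replace $f$ by its reduction $\tilde f$ obtained by repeatedly substituting $x_i^{\alpha_i+1}$ by the lower-degree polynomial $-(g_i(x_i)-x_i^{\alpha_i+1})$ whenever it occurs. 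Since $f$ and $\tilde f$ agree on $A_1\times\cdots\times A_n$, the right-hand side of (*) is unchanged under this reduction. The crucial point — where maximality of $(\alpha_1,\dots,\alpha_n)$ in $\limfunc{Supp}(f)$ is used — is that this reduction does \emph{not} alter the coefficient $f_{\alpha_1,\dots,\alpha_n}$: the only monomials of $f$ that could contribute to $x_1^{\alpha_1}\cdots x_n^{\alpha_n}$ after reduction are those $x_1^{\beta_1}\cdots x_n^{\beta_n}$ with $\beta_i\ge\alpha_i$ for all $i$, i.e.\ those above $(\alpha_1,\dots,\alpha_n)$ in the partial order, and by maximality the only such monomial present in $f$ is $x_1^{\alpha_1}\cdots x_n^{\alpha_n}$ itself (which is left fixed by the reduction). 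Hence $\tilde f_{\alpha_1,\dots,\alpha_n}=f_{\alpha_1,\dots,\alpha_n}$.

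After this reduction we are left to prove the identity for a polynomial $\tilde f$ of degree $\le\alpha_i$ in $x_i$, and for such polynomials the identity is exactly the statement that multivariate Lagrange interpolation on the grid reproduces the polynomial. Concretely, by linearity it suffices to check (*) on a single monomial $x_1^{\beta_1}\cdots x_n^{\beta_n}$ with $0\le\beta_i\le\alpha_i$. Here the sum on the right factors as a product over $i$ of sums $\sum_{a\in A_i} a^{\beta_i}\prod_{b\in A_i\setminus\{a\}}(a-b)^{-1}$, which is the coefficient of $x^{\alpha_i}$ in the degree-$\le\alpha_i$ Lagrange interpolant of $x^{\beta_i}$ through $A_i$; that interpolant is just $x^{\beta_i}$ itself, so the factor equals $1$ if $\beta_i=\alpha_i$ and $0$ otherwise. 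Thus the product is $1$ precisely when $(\beta_1,\dots,\beta_n)=(\alpha_1,\dots,\alpha_n)$ and $0$ otherwise, giving $\sum_{(a_1,\dots,a_n)}\tilde f(a_1,\dots,a_n)/N(a_1,\dots,a_n)=\tilde f_{\alpha_1,\dots,\alpha_n}=f_{\alpha_1,\dots,\alpha_n}$, as required. (One can phrase the single-variable computation directly via the interpolation polynomial $\sum_a x^{\beta}\cdot\text{(stuff)}$ as in the proof of Lemma \ref{L}, or feed the case $\beta_i=\alpha_i$, i.e.\ the "top" coefficient extraction, through Lemma \ref{L} after a shift; either way it is elementary.)

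I expect the main obstacle to be the bookkeeping in the reduction step — specifically, making rigorous the claim that reducing $f$ modulo the $g_i$ leaves $f_{\alpha_1,\dots,\alpha_n}$ untouched. The cleanest way is: each substitution $x_i^{\alpha_i+1}\mapsto -(g_i-x_i^{\alpha_i+1})$ applied to a monomial $m$ produces monomials all of which are $\le$ (in the product order, in fact strictly below in the $i$-th coordinate) than $m$ only after we note it can only raise the exponent in coordinate $i$ up from something, never above — more carefully, a monomial of $f$ contributes to the target $x^{\alpha}$ after full reduction only if it dominates $x^{\alpha}$ coordinatewise, and maximality kills all such monomials except $x^{\alpha}$ itself, which is already reduced and hence contributes its coefficient $1\cdot f_{\alpha_1,\dots,\alpha_n}$. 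Once that lemma-sized observation is in place, the rest is the routine Lagrange-interpolation computation above, and the role of the hypothesis $|A_i|=\alpha_i+1$ (equality, not just $\ge$) is exactly to make the reduced polynomial have the right degree bounds so that interpolation is exact.
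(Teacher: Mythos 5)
Your proof is correct, and it takes a genuinely different route from the paper's. The paper argues by induction on the size of $\limfunc{Cone}(f)$, at each step subtracting from $f$ a polynomial of the form $f_{\beta}\,x_2^{\beta_2}\cdots x_n^{\beta_n}\prod_{b\in B_1}(x_1-b)$ (or, when the maximal cone element is $(\alpha_1,\ldots,\alpha_n)$ itself, the unnormalized indicator $f_{\alpha_1,\ldots,\alpha_n}\prod_i\prod_{b\in A_i\setminus\{a_i\}}(x_i-b)$ of a single grid point) so as to strictly shrink the cone, and it kills the contribution of each subtracted piece to the right-hand side of (*) via Lemma \ref{L}. You instead reduce $f$ once and for all modulo the ideal generated by $g_i(x_i)=\prod_{b\in A_i}(x_i-b)$, observe that this changes neither the grid values nor --- by maximality of $(\alpha_1,\ldots,\alpha_n)$, since only monomials dominating $x_1^{\alpha_1}\cdots x_n^{\alpha_n}$ coordinatewise can feed into that coefficient under reduction --- the coefficient $f_{\alpha_1,\ldots,\alpha_n}$, and then verify (*) monomial by monomial on the reduced polynomial, where the grid sum factors into univariate sums equal to leading coefficients of Lagrange interpolants. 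Your univariate computation is the same algebraic fact as Lemma \ref{L} (indeed it subsumes it), and both proofs invoke maximality at the analogous point, namely to exclude monomials strictly above $(\alpha_1,\ldots,\alpha_n)$; but your global structure (reduce mod the grid ideal, then interpolate) avoids the cone induction and the two-case analysis, at the cost of having to justify that the reduction preserves the top coefficient --- which you state and argue correctly, so the proof is complete.
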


\begin{proof}
The proof is by induction on the number of elements in the set%
\begin{equation*}
\limfunc{Cone}(f)=\{\beta \in \mathbb{N}^{n}:\text{there\ exists}\;\alpha
\in \limfunc{Supp}(f)\;\text{and}\;\alpha \geq \beta \}.
\end{equation*}%
If $\left\vert \limfunc{Cone}(f)\right\vert =0$ then $f\equiv 0$ and the
theorem is trivial. Otherwise let $(\beta _{1},\ldots ,\beta _{n})$ be a
maximal element of $\limfunc{Cone}(f)$, so it also belongs to $\limfunc{Supp}%
(f)$. If $(\beta _{1},\ldots ,\beta _{n})=(\alpha _{1},\ldots ,\alpha _{n})$%
, then consider the polynomial 
\begin{equation*}
f^{\prime }(x_{1},\ldots ,x_{n})=f(x_{1},\ldots ,x_{n})-f_{\alpha
_{1},\ldots ,\alpha _{n}}\cdot \prod_{i=1}^{n}\prod_{b\in A_{i}\setminus
\{a_{i}\}}(x_{i}-b)
\end{equation*}%
for arbitrary $a_{1}\in A_{1},\ldots ,a_{n}\in A_{n}$. Notice that 
\begin{equation*}
\limfunc{Cone}(f^{\prime })\subset \limfunc{Cone}(f)\setminus \{(\alpha
_{1},\ldots ,\alpha _{n})\},
\end{equation*}%
so from inductive assumption we get the assertion for polynomial $f^{\prime
} $. Since (*) is $\mathbb{F}$-linear, to prove it for $f$ it is enough to
prove it for the polynomial 
\begin{equation*}
h=f_{\alpha _{1},\ldots ,\alpha _{n}}^{-1}\cdot (f-f^{\prime
})=\prod_{i=1}^{n}\prod_{b\in A_{i}\setminus \{a_{i}\}}(x_{i}-b).
\end{equation*}%
But now the equality is obvious: $h(x_{1},\ldots ,x_{n})\neq 0$ only for $%
(a_{1},\ldots ,a_{n})$ and therefore $h(a_{1},\ldots ,a_{n})=N(a_{1},\ldots
,a_{n})$.

If $(\beta _{1},\ldots ,\beta _{n})\neq (\alpha _{1},\ldots ,\alpha _{n})$
then by the assumptions we have $(\beta _{1},\ldots ,\beta _{n})\ngtr
(\alpha _{1},\ldots ,\alpha _{n})$. So there exists $i$ such that $\beta
_{i}<\alpha _{i}$, without loss of generality we can assume that $\beta
_{1}<\alpha _{1}$. Let $B_{1}\subset A_{1}$ be any subset with $\beta _{1}$
elements. So, we have $\left\vert A_{1}\setminus B_{1}\right\vert \geq 2$.
Consider the polynomial 
\begin{equation*}
f^{\prime }(x_{1},\ldots ,x_{n})=f(x_{1},\ldots ,x_{n})-f_{\beta _{1},\ldots
,\beta _{n}}\cdot x_{2}^{\beta _{2}}\cdots x_{n}^{\beta _{n}}\cdot
\prod_{b\in B_{1}}(x_{1}-b).
\end{equation*}%
As before we have that 
\begin{equation*}
\limfunc{Cone}(f^{\prime })\subset \limfunc{Cone}(f)\setminus \{(\beta
_{1},\ldots ,\beta _{n})\},
\end{equation*}%
so, from inductive assumption we get the assertion for polynomial $f^{\prime
}$. It remains to prove it for the polynomial 
\begin{equation*}
h=f_{\beta _{1},\ldots ,\beta _{n}}^{-1}\cdot (f-f^{\prime })=x_{2}^{\beta
_{2}}\cdots x_{n}^{\beta _{n}}\cdot \prod_{b_{1}\in B_{1}}(x_{1}-b_{1}).
\end{equation*}%
Obviously, the right-hand side of equality (*) equals zero. After rewriting
the left-hand side we get 
\begin{eqnarray*}
&&\sum_{a\in A_{1}\times \ldots \times A_{n}}\left\{
\prod_{i=1}^{n}\prod_{b\in A_{i}\setminus \{a_{i}\}}(b-a)\right\} ^{-1}\cdot
x_{2}^{\beta _{2}}\cdots x_{n}^{\beta _{n}}\cdot \prod_{b\in B_{1}}(x_{1}-b)=
\\
&=&\sum_{a_{2}\in A_{2},\ldots ,a_{n}\in A_{n}}\prod_{i=2}^{n}\prod_{b\in
A_{i}\setminus \{a_{i}\}}\left( (b-a_{i})^{-1}\cdot x_{i}^{\beta
_{i}}\right) \cdot \\
&&\cdot \left( \sum_{a\in A_{1}}\prod_{b\in A_{1}\setminus
\{a\}}(b-a)^{-1}\prod_{b\in B_{1}}(x_{1}-b)\right)
\end{eqnarray*}%
The last factor in this product can be simplified to the form 
\begin{equation*}
\sum_{a\in A_{1}\setminus B_{1}}\prod_{b\in (A_{1}\setminus B_{1})\setminus
\{a\}}(b-a)^{-1},
\end{equation*}%
which is zero by the Lemma \ref{L}. The proof is completed.
\end{proof}

Notice that Theorem \ref{CF} implies Theorem \ref{GCN}. Indeed, if $%
f_{\alpha _{1},\ldots ,\alpha _{n}}\neq 0$, then $f$ cannot vanish on every
point of $A_{1}\times \ldots \times A_{n}$. Also if $f_{\alpha _{1},\ldots
,\alpha _{n}}=0$, then either $f$ vanishes on the whole set $A_{1}\times
\ldots \times A_{n}$, or there are at least two points for which $f$ takes a
non-zero value.

\section{Applications}

In this section we give an example of possible application of Theorem \ref%
{GCN}. In some sense it generalizes the idea of lucky labelings of graphs
from \cite{CzerwinskiIPL}. Given a simple graph $G=(V,E)$ and any function $%
c:V\rightarrow \mathbb{N}$, let $S(u)=\sum\nolimits_{v\in N(u)}c(v)$ denote
the sum of labels over the set $N(u)$ of all neighbors of $u$ in $G$. The
function $c$ is called a \emph{lucky labeling} of $G$ if $S(u)\neq S(w)$ for
every pair of adjacent vertices $u$ and $w$.  The main conjecture from \cite%
{CzerwinskiIPL} states that every $k$-colorable graph has a lucky labeling
with values in the set $\{1,2,\ldots ,k\}$. One of the results of \cite%
{CzerwinskiIPL} in this direction asserts that the set of labels $\{1,2,3\}$
is sufficient for every bipartite planar graph $G$. This result is a special
case of the following general theorem.

\begin{theorem}
Let $G$ be a bipartite graph, which has an orientation with outgoing degree
bounded by $k$. Suppose each vertex $v$ is equipped with a non-constant
polynomial $f_{v}\in \mathbb{R}[x]$ of degree at most $l$ and positive
leading coefficient. Then there is a labeling $c:V(G)\rightarrow
\{1,2,\ldots ,kl+1\}$ such that for any two adjacent vertices $u$ and $w$,%
\begin{equation*}
c(u)-\sum_{v\in N(u)}f_{v}(c(v))\neq c(w)-\sum_{v\in N(w)}f_{v}(c(v)).
\end{equation*}
\end{theorem}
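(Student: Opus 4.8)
The plan is to derive the statement from Theorem~\ref{GCN}. Write $V(G)=X\cup Y$ for the bipartition, fix the given orientation with out-degrees $d^{+}(v)\le k$, and attach a variable $x_{v}$ to every vertex. Set $g_{u}:=x_{u}-\sum_{v\in N(u)}f_{v}(x_{v})$ and
\[
P:=\prod_{uw\in E(G)}(g_{u}-g_{w}).
\]
A labeling $c$ with the required property is exactly a point $(c(v))_{v\in V}$ at which $P$ does not vanish, so it suffices to produce an exponent vector $\alpha =(\alpha _{v})_{v\in V}$ with $\alpha _{v}\le kl$ for all $v$, such that $x^{\alpha }:=\prod_{v}x_{v}^{\alpha _{v}}$ is a nonvanishing monomial of $P$ and $\alpha$ is maximal in $\limfunc{Supp}(P)$. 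Then Theorem~\ref{GCN} applied with $A_{v}=\{1,\ldots ,kl+1\}$, which satisfies $|A_{v}|=kl+1\ge \alpha _{v}+1$, yields values $a_{v}\in A_{v}$ at which $P$ does not vanish, and $c(v):=a_{v}$ is the desired labeling.

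The role of bipartiteness shows up when one expands a single factor. For an edge $uw$ with $u\in X$ and $w\in Y$ the neighbourhoods $N(u)\subseteq Y$ and $N(w)\subseteq X$ are disjoint, so there is no cancellation and
\[
g_{u}-g_{w}=\bigl(x_{u}+f_{u}(x_{u})\bigr)-\bigl(x_{w}+f_{w}(x_{w})\bigr)-\sum_{v\in N(u)\setminus\{w\}}f_{v}(x_{v})+\sum_{v\in N(w)\setminus\{u\}}f_{v}(x_{v})
\]
is a sum of univariate polynomials in pairwise distinct variables. Put $\delta _{v}:=\deg f_{v}$, so that $1\le \delta _{v}\le l$. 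Then the summand of the displayed factor involving $x_{v}$ has degree exactly $\delta _{v}$, and, since $f_{v}$ has positive leading coefficient, its leading coefficient is positive when $v\in X$ and negative when $v\in Y$.

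Next, expanding $P$ over all edges, each monomial of $P$ comes from choosing, for every edge $e$, one of the variables $v(e)$ that occur in the $e$-th factor and then a monomial of degree at most $\delta _{v(e)}$ from the corresponding summand. I would take $\alpha _{v}:=\delta _{v}\,d^{+}(v)\le kl$; this is the exponent vector produced by the choice ``$v(e):=$ tail of $e$''. For maximality, suppose $\beta \in \limfunc{Supp}(P)$ with $\beta \ge \alpha$. Then some choice contributes the monomial $x^{\beta }$; if $m_{v}$ denotes the number of edges assigned to $v$ by that choice, then $\beta _{v}\le \delta _{v}m_{v}$, hence $m_{v}\ge d^{+}(v)$ for every $v$. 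As $\sum_{v}m_{v}=|E(G)|=\sum_{v}d^{+}(v)$, this forces $m_{v}=d^{+}(v)$ for all $v$, and therefore $\beta =\alpha$. The same count shows that the choices contributing to the coefficient of $x^{\alpha }$ in $P$ are precisely those that assign exactly $d^{+}(v)$ edges to each vertex $v$ and take the leading term of every summand.

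The main obstacle is to check that this coefficient is nonzero, i.e.\ that those contributions do not cancel; this is where both hypotheses --- $G$ bipartite and the $f_{v}$ having positive leading coefficients --- are indispensable. For every contributing choice the product of the selected leading coefficients is a nonzero real whose sign equals $(-1)^{\sum_{v\in Y}d^{+}(v)}$, the same for all of them, while the tail choice shows that at least one contributing choice exists; hence the coefficient of $x^{\alpha }$ in $P$ is nonzero. Thus $x^{\alpha }$ is a nonvanishing monomial of $P$, it is maximal in $\limfunc{Supp}(P)$, and $\alpha _{v}\le kl$ for all $v$, so Theorem~\ref{GCN} completes the proof.
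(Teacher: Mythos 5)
Your proof is correct and follows essentially the same route as the paper: the same product polynomial, the same monomial obtained by taking the leading term of the tail's summand in each factor, the same bipartiteness-plus-positive-leading-coefficient sign argument showing the contributions cannot cancel over $\mathbb{R}$, and the same appeal to Theorem \ref{GCN} with $\alpha_v=\delta_v d^{+}(v)\le kl$. The only difference is that you verify maximality of $\alpha$ in the support by an explicit edge-counting argument, where the paper invokes the shorter observation that one may collapse each $f_v(x_v)$ to $x_v$.
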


\begin{proof}
Assign to each vertex $v\in V(G)$ a variable $x_{v}$. Consider the
polynomial  
\begin{equation*}
h=\prod_{uw\in E(G)}(\sum_{v\in N(u)}f_{v}(x_{v})+x_{w}-\sum_{v\in
N(w)}f_{v}(x_{v})-x_{u})
\end{equation*}%
We want to show that we can choose values for $x_{v}$ from the set $%
\{1,\ldots ,kl+1\}$ so that $h$ is non-zero. Let us fix an orientation of $G$
where outgoing degree is bounded by $k$. For each edge $uw\in E(G)$ oriented 
$u\rightarrow w$ choose the leading monomial in $f_{u}(x_{u})$ from the
factor corresponding to this edge in $h$. The product of this monomials is a
monomial $M$ of $h$ satisfying $\deg _{x_{v}}(M)\leq kl$ (since monomials
from $f_{u}(x_{u})$ are taken at most $k$ times). We claim that the
coefficient of $M$ in $h$ is nonzero. Indeed, each time we take a product of
monomials from factors of $h$ resulting in the monomial $M$, the sign of $M$
is the same (because $G$ is bipartite and leading coefficients of $f_{v}$
are positive). So the copies of $M$ cannot cancel as we are working in the
field $\mathbb{R}$. Finally, maximality of $M$ in $\limfunc{Supp}(h)$ can be
seen easily by collapsing each polynomial $f_{v}(x_{v})$ to $x_{v}$. The
assertion follows from Theorem \ref{GCN}.
\end{proof}

Notice that in the above theorem the labels can be taken from arbitrary
lists of size at least $kl+1$.

Let us conclude the paper with the following remark. Suppose that we want to
use classical Combinatorial Nullstellensatz to the polynomial $%
f(x_{1},\ldots ,x_{n})$ of degree $\sum_{i=1}^{n}\alpha _{i}$ with nonzero
coefficient of $x_{1}^{\alpha _{1}}\cdots x_{n}^{\alpha _{n}}$. If $%
f(x_{1},\ldots ,x_{n})=g(h(x_{1}),x_{2},\ldots ,x_{n})$ with $\deg (h)=k$,
then for arbitrary sets $A_{1},A_{2},\ldots ,A_{n}\subset \mathbb{F}$, with $%
h(a)\neq h(b)$ for all distinct $a,b\in A_{1}$ and of size at least $\alpha
_{1}/k+1,\alpha _{2}+1,\ldots ,\alpha _{n}+1$, $f$ does not vanish on $%
A_{1}\times \ldots \times A_{n}$. So we gain almost $k$ times smaller first
set in comparison with the classical version. It is an immediate consequence
of substitution $x_{1}^{\prime }:=h(x_{1})$ and Theorem \ref{GCN} applied to 
$f^{\prime }(x_{1}^{\prime },x_{2},\ldots ,x_{n})=f(x_{1},\ldots ,x_{n})$.
Analogous corollary is true for more variables being in fact equal to some
polynomials.

\section*{Acknowledgement} 
I would like to thank Jarek Grytczuk for stimulating discussions on the polynomial method in Combinatorics.

\end{document}